\documentclass[12pt]{elsarticle}

\makeatletter
\def\ps@pprintTitle{%
  \let\@oddhead\@empty
  \let\@evenhead\@empty
  \let\@oddfoot\@empty
  \let\@evenfoot\@oddfoot
}
\makeatother




\usepackage{amssymb}




\newtheorem{theorem}{Theorem}[section]
\newtheorem{lemma}[theorem]{Lemma}

\newtheorem{proposition}[theorem]{Proposition}
\newdefinition{definition}{Definition}
\newdefinition{remark}{Remark}
\newdefinition{example}{Example}
\newdefinition{question}{Question}
\newproof{proof}{Proof}



\begin{document}

\begin{frontmatter}



\title{Baire and weakly Namioka spaces}


\author{Zbigniew Piotrowski}

\address{Department of Mathematics, Youngstown State University, Youngstown, OH 44555, USA}

\author{Russell Waller}

\address{Department of Mathematics, Florida State University, Tallahassee, FL 32306, USA}

\begin{abstract}
Recall that a Hausdorff space $X$ is said to be \emph{Namioka} if for every compact (Hausdorff) space $Y$ and every metric space $Z$, every separately continuous function $f:X\times{Y}\rightarrow{Z}$ is continuous on $D\times{Y}$ for some dense $G_\delta$ subset $D$ of $X$. It is well known that in the class of all metrizable spaces, Namioka and Baire spaces coincide \cite{Saint-Raymond83}. Further it is known that every completely regular Namioka space is Baire and that every separable Baire space is Namioka \cite{Saint-Raymond83}.

In our paper we study spaces $X$, we call them \emph{weakly Namioka}, for which the conclusion of the theorem for Namioka spaces holds provided that the assumption of compactness of $Y$ is replaced by \emph{second countability} of $Y$. We will prove that in the class of all completely regular separable spaces and in the class of all perfectly normal spaces, $X$ is Baire if and only if it is weakly Namioka.
\end{abstract}

\begin{keyword}
Baire space \sep Namioka space \sep weakly Namioka space \sep separate continuity

\MSC 54C05 \sep 54C35 \sep 54B10

\end{keyword}

\end{frontmatter}


\section{Introduction}
\label{}

All the spaces considered in this paper are assumed to be Hausdorff. For spaces $X$, $Y$, and $Z$, we say a function $f:X\times{Y}\rightarrow{Z}$ is \emph{continuous with respect to $x$} if $f|_{X\times{\{y\}}}$ is continuous for every $y\in{Y}$. Similarly, $f$ is \emph{continuous with respect to $y$} if $f|_{\{x\}\times{Y}}$ is continuous for every $x\in{X}$. We say $f$ is \emph{separately continuous} if $f$ is continuous with respect to $x$ and continuous with respect to $y$.  In \cite{Saint-Raymond83}, J. Saint-Raymond shows that every completely regular Namioka space is Baire and that every metrizable or separable Baire space is Namioka, thus providing a characterization of Baire spaces in terms of Namioka spaces. In this paper, we will be concerned with finding a similar characterization of Baire spaces using weakly Namioka spaces. 


\section{Main Results}
\label{}

Let us start with the following:

\begin{definition}
A space $X$ is \emph{weakly Namioka} if for every second countable space $Y$ and every metric space $Z$, every separately continuous function $f:X\times{Y}\rightarrow{Z}$ is continuous on $D\times{Y}$ for some dense $G_\delta$ subset $D$ of $X$.
\end{definition}

In \cite{Calbrix}, J. Calbrix and J.P. Troallic show that given a sequence of open subsets $(U_n)_{n\in{\mathbb{N}}}$ of $Y$ and a metric space $M$, there is a residual set $R$ in $X$ such that the separately continuous function $f:X\times{Y}\rightarrow{M}$ is continuous at each point of $R\times{Q}$, where $Q$ is the set of points $y\in{Y}$ admitting a subsequence of $(U_n)_{n\in{\mathbb{N}}}$ as a neighborhood basis. In \cite {BT}, \cite{M}, \cite{MR} \cite{Mas}, \cite{MasNest}, \cite{MN}, \cite{Nest}, it is shown that for a topological space $X$, a second countable space $Y$, a metric space $M$, and $f:X\times{Y}\rightarrow{M}$ such that $f$ is continuous with respect to $y$ and $f|_{X\times{\{y\}}}$ is continuous for every $y\in{D}$ for some $D$ dense in $Y$, there is a residual set $A$ in $X$ such that $f$ is continuous at each point of $A\times{Y}$. Here we offer a new proof of this result:

\begin{lemma}\label{BrownLemma}
Let $Y$ be second countable, $(M,d)$ be metric and $f:X\times{Y}\rightarrow{M}$ be such that $f$ is continuous with respect to $y$ and $f|_{X\times{\{y\}}}$ is continuous for every $y\in{D}$ for some $D$ dense in $Y$. There is then a residual set $A$ in $X$ such that $f$ is continuous on $A\times{Y}$.
\end{lemma}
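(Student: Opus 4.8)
The plan is to exhibit $A$ by showing that its complement — the set of those $x_0$ at which $f$ is discontinuous at $(x_0,y_0)$ for some $y_0$ — is meager, and, crucially, to do this for an arbitrary (not necessarily Baire) $X$. Fix a countable base $\{V_k\}_{k\in\mathbb{N}}$ of nonempty open subsets of $Y$ and, for open $V\subseteq Y$ and $\alpha>0$, consider the ``tight fibre'' sets $F_V^{\alpha}=\{x\in X:\mathrm{diam}\,f(\{x\}\times V)\le\alpha\}$. First I would check that each $F_V^{\alpha}$ is closed: since $D\cap V$ is dense in $V$ and every slice $y\mapsto f(x,y)$ is continuous, the image $f(\{x\}\times(D\cap V))$ is dense in $f(\{x\}\times V)$, so $\mathrm{diam}\,f(\{x\}\times V)=\sup\{d(f(x,y),f(x,y')):y,y'\in D\cap V\}$; as the slices $f(\cdot,y)$ with $y\in D$ are continuous, this is a supremum of continuous functions of $x$, hence lower semicontinuous, and its sublevel set $F_V^{\alpha}$ is closed.

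The central step is the following. For each $\alpha>0$ set $G_\alpha=\{x\in X:\text{for every }y\in Y\text{ there is a base }V_k\ni y\text{ with }x\in\mathrm{int}\,F_{V_k}^{\alpha}\}$, and I would prove that $X\setminus G_\alpha$ is meager. Indeed, if $x\notin G_\alpha$, fix a witness $y$; continuity of $f$ with respect to $y$ furnishes a base set $V_k\ni y$ with $\mathrm{diam}\,f(\{x\}\times V_k)<\alpha$, so $x\in F_{V_k}^{\alpha}$ while $x\notin\mathrm{int}\,F_{V_k}^{\alpha}$, that is $x\in\partial F_{V_k}^{\alpha}$. Hence $X\setminus G_\alpha\subseteq\bigcup_k\partial F_{V_k}^{\alpha}$, a countable union of boundaries of closed sets; since each such boundary is nowhere dense in any space, $X\setminus G_\alpha$ is meager. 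Taking $A=\bigcap_{n\in\mathbb{N}}G_{1/n}$ then gives a residual set.

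Finally I would verify that $f$ is continuous at every $(x_0,y_0)$ with $x_0\in A$. Given $\eta>0$, choose $n$ with $1/n\le\eta/4$; membership $x_0\in G_{1/n}$ supplies an open $U_1\ni x_0$ and a base $V\ni y_0$ with $\mathrm{diam}\,f(\{x\}\times V)\le\eta/4$ for all $x\in U_1$. Picking any $y^{*}\in D\cap V$, the slice $f(\cdot,y^{*})$ is continuous, so some open $U_2\ni x_0$ satisfies $\mathrm{diam}\,f(U_2\times\{y^{*}\})<\eta/2$. Routing the triangle inequality through the common value at $y^{*}$ then yields $\mathrm{diam}\,f((U_1\cap U_2)\times V)\le\eta/4+\eta/2+\eta/4=\eta$, and since $\eta$ is arbitrary $f$ is continuous at $(x_0,y_0)$.

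The delicate point is the passage from fibrewise to joint continuity. The naive route — each fibre $f(\cdot,y)$ is a pointwise limit of continuous slices $f(\cdot,y_i)$ with $y_i\in D$, hence of Baire class one with meager discontinuity set — controls only the oscillation in $x$ and, worse, would have to be aggregated over uncountably many $y$. The two ideas I expect to carry the argument are: compressing ``for all $y$'' into the countable family $\{F_{V_k}^{\alpha}\}$ using second countability together with continuity in $y$; and decoupling the joint oscillation into a fibre part (handled on a whole neighbourhood by membership in $G_\alpha$) plus a single continuous $D$-slice $f(\cdot,y^{*})$. The inclusion $X\setminus G_\alpha\subseteq\bigcup_k\partial F_{V_k}^{\alpha}$ is exactly what makes the exceptional set meager with no completeness or Baire assumption on $X$, and securing that inclusion is, I think, the crux.
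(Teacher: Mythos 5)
Your proof is correct, and it takes a genuinely different route from the paper's. The paper (crediting J.B. Brown) argues by contradiction: it assumes the set $M_1$ of points $x$ admitting some discontinuity $(x,y_x)$ is of second category, extracts a second-category subset $M_3$ with uniform control (a fixed oscillation threshold $\epsilon$ and a fixed basic neighborhood $B_n$ of the witnesses $y_x$), finds an open set in which $M_3$ is dense, and then chains six $\epsilon/6$ triangle-inequality estimates through points of $D$ to contradict the assumed $\epsilon$-discontinuity. You instead give a direct, constructive argument in the style of Calbrix--Troallic (which the paper cites as an alternative source for this result but does not reproduce): the closed sets $F_{V_k}^{\alpha}$ defined by fibre diameters, the inclusion $X\setminus G_\alpha\subseteq\bigcup_k\partial F_{V_k}^{\alpha}$, and the fact that boundaries of closed sets are nowhere dense together exhibit the residual set explicitly as $A=\bigcap_n G_{1/n}$, after which a single three-term triangle inequality through a $D$-slice $y^{*}$ gives joint continuity on $A\times Y$. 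Both proofs use the same raw ingredients -- second countability of $Y$ to reduce to countably many basic sets, continuity in $y$ to localize, and continuity of the $D$-slices $f(\cdot,y)$ to control the $x$-direction (in your case, to make each $F_{V_k}^{\alpha}$ closed via lower semicontinuity of a supremum of continuous functions) -- and neither needs any Baire-type hypothesis on $X$. What your version buys is transparency: the residual set is named rather than produced by contradiction, the meagerness of its complement is visible as a countable union of boundaries, and the final continuity check is a short computation; the paper's version, by contrast, keeps the bookkeeping elementary (no semicontinuity, no interior/boundary manipulations) at the cost of an indirect structure.
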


\begin{proof}
Without loss of generality, assume $X$ is of second category, for otherwise there is nothing to prove. In fact, if $X$ is of first category then the set $A$ can possibly be a priori empty. Assume $M_1=\{x\in{X}: \exists{y\in{Y}}$, $f$ is discontinuous at $(x,y)\}$ is of second category in $X$.

For each $x\in{M_1}$, let $y_x$ be an element of $Y$ such that $f$ is discontinuous at $(x,y_x)$. Let $\epsilon>0$ be such that the set $M_2=\{x\in{M_1} :$ for any open neighborhood $O$ of $(x,y_x)$ in $X\times{Y}$, there exists $(u,v)\in{O}$ such that $d(f(x,y_x), f(u,v))\geq{\epsilon}\}$ is of second category in $X$. 

Let $B_1,B_2,B_3, ...$ be a countable base for $Y$.
Now, let $n$ be an index of $B_n$ such that the set

$(*)$ $M_3=\{x\in{M_2}: y_x\in{B_n}$ and $\forall{y\in{B_n}},d(f(x,y_x),f(x,y))<\frac{\epsilon}{6}\}$ 

\noindent is of second category in $X$. Let $y_1\in{D\cap{B_n}}$ and let $U$ be an open set of $X$ such that $U\cap{M_3}$ is dense in $U$. Let $x_0\in{U\cap{M_3}}$ and let $V$ be an open subset of $U$ containing $x_0$ such that for every $x\in{V}$, 

$(**)$ $d(f(x_0,y_1),f(x,y_1))<\frac{\epsilon}{6}$.

\noindent There is a point $(a,b)\in{V\times{B_n}}$, such that

$(***)$ $d(f(a,b), f(x_0,y_{x_0}))\geq{\epsilon}$. 

\noindent Let $G$ be an open subset of $B_n$, containing $b$ such that for every $y\in{G}$,

$(*v)$ $d(f(a,b), f(a,y))<\frac{\epsilon}{6}$. 

\noindent Let $c\in{G\cap{D}}$. Finally, let $W$ be an open subset of $V$ containing $a$ such that for every $x\in{W}$,

$(v)$ $d(f(a,c)), f(x,c))<\frac{\epsilon}{6}$.

\noindent Let $x_1\in{W\cap{M_3}}$. Now:

$d(f(a,b),f(a,c))<\frac{\epsilon}{6}$, by $(*v)$

$d(f(a,c), f(x_1,c))<\frac{\epsilon}{6}$, by $(v)$

$d(f(x_1,c), f(x_1,y_{x_1}))<\frac{\epsilon}{6}$ by $(*)$

$d(f(x_1,y_{x_1}), f(x_1,y_1))<\frac{\epsilon}{6}$ by $(*)$

$d(f(x_1,y_1), f(x_0,y_1))<\frac{\epsilon}{6}$, by $(**)$

$d(f(x_0,y_1), f(x_0,y_{x_0}))<\frac{\epsilon}{6}$, by $(*)$.

\noindent Hence: $d(f(a,b),f(x_0,y_{x_0}))<\epsilon$, contradicting $(***)$.
\qed
\end{proof}

From either of the above mentioned results comes the following theorem as an easy corollary:

\begin{theorem}
\label {Brown}
Baire spaces are weakly Namioka.
\end{theorem}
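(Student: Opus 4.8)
The plan is to deduce the theorem directly from Lemma~\ref{BrownLemma}, so the work amounts to checking that the lemma's hypotheses are met and then upgrading its conclusion to the form demanded by the definition of weakly Namioka. Let $X$ be a Baire space, let $Y$ be second countable, let $(Z,d)$ be a metric space, and let $f:X\times{Y}\rightarrow{Z}$ be separately continuous. First I would observe that separate continuity gives strictly more than the lemma requires: $f$ is continuous with respect to $y$, and $f|_{X\times\{y\}}$ is continuous for \emph{every} $y\in{Y}$, so the density hypothesis holds trivially with $D=Y$. Applying Lemma~\ref{BrownLemma} therefore yields a residual set $A\subseteq{X}$ such that $f$ is continuous at every point of $A\times{Y}$.

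It remains to replace the residual set $A$ by a dense $G_\delta$ set, and this is the only place where the Baire hypothesis enters. By definition $X\setminus{A}$ is meager, say $X\setminus{A}=\bigcup_{n}N_n$ with each $N_n$ nowhere dense. Setting $D=\bigcap_{n}(X\setminus\overline{N_n})$ produces a $G_\delta$ set contained in $A$, since each $X\setminus\overline{N_n}$ is open and dense in $X$. Because $X$ is Baire, this countable intersection of dense open sets is itself dense, so $D$ is a dense $G_\delta$ subset of $X$.

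Finally, since $D\subseteq{A}$ we have $D\times{Y}\subseteq{A}\times{Y}$, whence $f$ is continuous at every point of $D\times{Y}$. As $Y$, $Z$, and $f$ were arbitrary, $X$ is weakly Namioka, which is the assertion. I expect no genuine obstacle here, consistent with the author's remark that the theorem is an easy corollary; the only step needing care is the standard passage from a comeager set to a dense $G_\delta$ subset, which is precisely the step that requires $X$ to be Baire.
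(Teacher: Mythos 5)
Your proof is correct and follows exactly the route the paper intends: the paper states the theorem as an ``easy corollary'' of Lemma~\ref{BrownLemma}, and your write-up simply makes that derivation explicit --- applying the lemma with $D=Y$ and then using the Baire property to pass from the residual set to a dense $G_\delta$ set. The only step needing care, extracting a dense $G_\delta$ subset $\bigcap_n\bigl(X\setminus\overline{N_n}\bigr)$ of the residual set, is handled correctly, and you rightly note this is precisely where Baireness of $X$ is used.
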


All that remains, then, in finding a characterization of Baire spaces using weakly Namioka spaces is to determine under what conditions the converse is true. We turn to this task now.
The proof of the forthcoming theorem is an adaptation of the proof of Theorem 3 of \cite{Saint-Raymond83}.

\begin{theorem}
\label{big}
Completely regular, separable, weakly Namioka spaces are Baire.
\end{theorem}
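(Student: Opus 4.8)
The plan is to prove the contrapositive: assume $X$ is completely regular, separable, and \emph{not} Baire, and construct a second countable $Y$, a metric space $Z$, and a separately continuous $f:X\times Y\to Z$ that fails to be continuous on $D\times Y$ for \emph{every} dense $G_\delta$ subset $D$ of $X$, thereby witnessing that $X$ is not weakly Namioka. Since $X$ is not Baire, there is a nonempty open set $W\subseteq X$ that is of first category, so we may write $W=\bigcup_{n\in\mathbb N}F_n$ where each $F_n$ is closed and nowhere dense, and without loss of generality the $F_n$ are increasing. The idea is to encode this first-category decomposition into a separately continuous function whose points of discontinuity are spread densely enough across $W$ that no dense $G_\delta$ can avoid them.

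First I would exploit separability: let $S=\{s_k:k\in\mathbb N\}$ be a countable dense subset of $X$. The natural choice for $Y$, following the adaptation of Saint-Raymond's Theorem 3, is a second countable space built from $\mathbb N$ (or a convergent sequence such as $\{0\}\cup\{1/n:n\in\mathbb N\}$), and for $Z$ a separable metric space such as $\mathbb R$ or the Hilbert cube. Using complete regularity, for each relevant pair I would select continuous functions $X\to[0,1]$ separating points of $S$ from the nowhere dense closed sets $F_n$; these are the building blocks assembled into $f$. The construction should arrange that for each $n$, along the slice corresponding to index $n$ the function detects membership in $F_n$, so that at each point of $W\setminus\bigcup_n F_n$-complement behavior forces a discontinuity in the $X$-direction while continuity in the $Y$-direction is retained by construction.

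The key steps in order are: (1) record the first-category witness $W=\bigcup F_n$ and fix the countable dense $S$; (2) specify $Y$, $Z$, and a family of continuous coordinate maps via complete regularity; (3) define $f$ by these coordinates and verify separate continuity---continuity in $x$ follows because each coordinate map is continuous on $X$, and continuity in $y$ follows from the discrete/convergent structure of $Y$; (4) identify the discontinuity set $E\subseteq X$ of $f$ and show $E$ is dense in $W$, or more precisely that $E$ meets every dense $G_\delta$; (5) conclude that no dense $G_\delta$ $D$ can satisfy $f$ continuous on $D\times Y$, so $X$ is not weakly Namioka.

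The main obstacle I expect is step (4) together with the interplay in step (3): I must engineer $f$ so that its discontinuities, which live on the nowhere dense sets $F_n$, nonetheless accumulate densely enough in $W$ that \emph{every} dense $G_\delta$ is forced to contain a discontinuity point, while simultaneously keeping each one-variable restriction genuinely continuous. The tension is that nowhere dense sets are topologically small, so a single $F_n$ cannot block a dense $G_\delta$; the resolution must use all the $F_n$ jointly, leveraging that a dense $G_\delta$ contained in a first-category open set is impossible (a dense $G_\delta$ in $X$ meeting $W$ would have to avoid $\bigcup F_n$, which is all of $W$). Making this precise---turning the failure of the Baire property into a concrete obstruction for separate-continuity---and mirroring the quantitative $\epsilon$-estimates of Saint-Raymond's argument in the second countable setting is where the real work lies.
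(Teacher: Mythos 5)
Your high-level strategy is the same as the paper's: prove the contrapositive by turning a first-category open set $K\subset\bigcup_n K_n$ into a separately continuous function whose discontinuities meet every dense set. (One simplification you could make even at the outline level: once every point of $K$ admits a discontinuity in its slice, mere \emph{density} of $D$ forces $D\cap K\neq\emptyset$; the $G_\delta$ property is never needed, so your worry about "dense $G_\delta$ versus first category" is a red herring.) However, the proposal stops exactly where the proof begins, and the specific ingredients you sketch would not work as stated. First, a $Y$ "built from $\mathbb N$ or a convergent sequence" is too small. The paper must take, for each $n$, a countable family $\Phi_n$ of bump functions and let $Y_n$ be the one-point compactification of $\Phi_n\times[0,1]$, with $Y=\coprod_n Y_n$; the compactness of each $Y_n$ is essential, because the discontinuity point $(x_K,y)$ is located by extracting a convergent \emph{subnet} of the $y$-coordinates of an approximating net. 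Second, naive separating functions cannot produce the discontinuity: near $x_K\in K_n$ every bump function vanishing on $K_n$ takes arbitrarily small values, so a function like $f(x,\varphi)=\varphi(x)$ has oscillation tending to $0$. The paper's fix is the extra $[0,1]$ coordinate and the $sp$-type formula $f(x,(\varphi,t))=\frac{2t\,\varphi(x)}{t^2+\varphi(x)^2}$, which equals $1$ whenever $t=\varphi(x)>0$, however small $\varphi(x)$ is.

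Third, and most importantly, your step (3) claim that "continuity in $y$ follows from the discrete/convergent structure of $Y$" hides the central difficulty: continuity of $f(x_0,\cdot)$ at the point at infinity $\lambda_n$ requires that, for fixed $x_0$, at most one $\varphi\in\Phi_n$ satisfies $\varphi(x_0)\neq 0$. This forces the cozero sets $A_\varphi=\{x:\varphi(x)\neq 0\}$ to be pairwise disjoint, and such a family with \emph{dense} union does not come for free from complete regularity: the paper obtains it by a Zorn's lemma argument (a maximal pairwise-disjoint family of bump functions vanishing on $K_n$ and equal to $1$ at points of $S\setminus K_n$), where maximality plus complete regularity yields density of $\bigcup_\varphi A_\varphi$, and separability yields countability of $\Phi_n$ (each disjoint $A_\varphi$ contains a point of the countable dense set), which is what makes $Y$ second countable. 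None of this appears in your outline, and you yourself flag that "making this precise \ldots is where the real work lies." That admission is accurate: what is missing is not a routine verification but the actual construction, so the proposal as written has a genuine gap.
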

\begin{proof}

Let $X$ be a completely regular, separable space and assume that $X$ is not Baire. Then there exists a nonempty open set $K$ in $X$ such that $K\subset{\bigcup_{n\in{\gamma}}K_n}$, where $\gamma$ is a countable indexing set and $K_n$ is closed and nowhere dense for each $n\in{\gamma}$.
Let $S$ be a countable dense subset of $X$.
Then for any $n\in{\gamma}$, $S\setminus{K_n}$ is dense in $X$. For each $n\in{\gamma}$ choose an indexing set $\gamma_n$ such that $S\setminus{K_n}=\{s_{n,i}:i\in{\gamma_n}\}$ and write $S_n=S\setminus{K_n}=\{s_{n,i}:i\in{\gamma_n}\}$.

Choose $n\in{\gamma}$ and let $\Phi$ be a set of continuous functions $\varphi:X\rightarrow{[0,1]}$ such that $\varphi(K_n)\subset{\{0\}}$, $\varphi(s_{n,i})=1$ for some $s_{n,i}\in{S_n}$, and such that $\varphi(x)\neq{0}\Rightarrow{\varphi'(x)=0}$ for all $x\in{X}$ and any $\varphi,\varphi'\in{\Phi}$, $\varphi\neq{\varphi'}$ (note that while for every $\varphi\in{\Phi}$ there is a corresponding $s_{n,i}\in{S_n}$ such that $\varphi(s_{n,i})=1$, the reverse is not true).
Then for each $\varphi\in{\Phi}$ the set $A_\varphi=\{x:\varphi(x)\neq{0}\}$ is nonempty and  $A_\varphi\cap{A_{\varphi'}}=\emptyset$ if $\varphi\neq{\varphi'}$ (note that supp($\varphi$) is the closure of $A_\varphi$).
Let $P_\Phi=\{A_\varphi:\varphi\in{\Phi}\}$ be the set of all $A_\varphi$'s for $\Phi$.
Let $F_n$ be the set of all $\Phi$'s so defined (for fixed $n\in{\gamma}$), and take the partial order $\prec$ on $F_n$ to be $\Phi\prec{\Phi'}$ if $P_\Phi\subset{P_{\Phi'}}$.
Take a simply ordered subset $H$ of $F_n$, and let $H^*=\bigcup_{\Phi\in{H}}\Phi$.
Then for any $\varphi\in{H^*}$ there exists $s_{n,i}\in{S_n}$ such that $\varphi(K_n)\subset{\{0\}}$ and $\varphi(s_{n,i})=1$.

Seeking contradiction, assume $A_{\varphi}\cap{A_{\varphi'}}\neq{\emptyset}$ for some $\varphi,\varphi'\in{H^*}$, $\varphi\neq{\varphi'}$.
There exists $\Phi,\Phi'\in{H}$ such that $\varphi\in{\Phi}$ and $\varphi'\in{\Phi'}$.
Since $H$ is simply ordered, it follows that $\Phi\prec{\Phi'}$ or $\Phi'\prec{\Phi}$. Without loss of generality, assume $\Phi\prec{\Phi'}$.
Then $P_\Phi\subset{P_{\Phi'}}$, so $A_\varphi=A_{\varphi''}$ for some $\varphi''\in{\Phi'}$.
But then $A_{\varphi''}\cap{A_{\varphi'}}\neq{\emptyset}$, a contradiction.
So for all $x\in{X}$, $\varphi(x)\neq{0}\Rightarrow{\varphi'(x)=0}$ if $\varphi\neq{\varphi'}$.
Thus $H^*\in{F_n}$, so $H^*$ is an upper bound of $H$ in $F_n$.
Zorn's Lemma therefore guarantees the existence of a maximal element of $F_n$.
For each $n$, then, take $\Phi_n$ to be the maximal element of $F_n$.
Note that each $\Phi_n$ is countable since it must be no larger than $S_n$.

Seeking contradiction, assume now that for some $n\in{\gamma}$, $\{x\in{X}:\exists{\varphi\in{\Phi_n}},\varphi(x)\neq{0}\}$ is not dense in $X$. Then there exists an open set $U$ in $X$ such that for all $\varphi\in{\Phi_n}$,  $\varphi(U)\subset{\{0\}}$.
Since $S_n$ is dense in $X$, there exists $s_{n,u}\in{S_n}$ such that $s_{n,u}\in{U}$. Then since $X$ is completely regular, there exists continuous $g:X\rightarrow{[0,1]}$ such that $g(s_{n,u})=1$ and $g((X\setminus{U})\cup{K_n})\subset{\{0\}}$.
But $\Phi_n\prec{\Phi_n\cup{\{g\}}}$, contradicting the maximality of $\Phi_n$. So for all $n\in{\gamma}$, $\{x\in{X}:\exists{\varphi\in{\Phi_n}}, \varphi(x)\neq{0}\}$ is dense in $X$.

Let $\Phi_n$ be given the discrete topology. Since $\Phi_n$ and $[0,1]$ are both locally compact and Hausdorff, their product $\Phi_n\times{[0,1]}$ is locally compact and Hausdorff, and so admits a one-point compactification. For each $n\in{\gamma}$, let $Y_n$ be the one-point compactification of $\Phi_n\times{[0,1]}$ where $\lambda_n$ is the point at infinity. 
Let $Y=\coprod_{n\in{\gamma}}{Y_n}$ be the disjoint union equipped with the coherent topology. Define $f:X\times{Y}\rightarrow{[0,1]}$ by

$f(x,y) = 
\left\{
\begin{array}{lr}
f(x,\varphi,t)=\frac{(2t)(\varphi(x))}{t^2+(\varphi(x))^2}& t\neq{0}$ and $\forall{n\in{\gamma}},  y\neq{\lambda_n} \\
0&$otherwise.$
\end{array}
\right.$

To show the separate continuity of $f$, first fix $y_0\in{Y}$.
If $y_0=(\varphi,0)$ or $y_0=\lambda_n$ for some $n\in{\gamma}$, then $f(x,y_0)=0$ for all $x\in{X}$. So $f|_{X\times{\{y_0\}}}\subset{\{0\}}$ and is thus continuous.
If $y_0\neq{(\varphi,0)}$ and $y_0\neq{\lambda_n}$ for all $n\in{\gamma}$, then $f(x,y_0)=f(x,\varphi,t)=\frac{(2t)\varphi(x))}{t^2+(\varphi(x))^2}$ for some fixed $\varphi$ and $t$,
so $f|_{X\times{\{y_0\}}}$ is continuous by the continuity of $\varphi$. 
Therefore $f|_{X\times{\{y\}}}$ is continuous for any fixed $y\in{Y}$.

Now fix $x_0\in{X}$.
Take an open set $(a,b)$ in $[0,1]$ (or take $(a,1]$ without loss of generality).
For each $n\in{\gamma}$, $f^{-1}(a,b)\cap{\{x_0\}\times{Y_n}}=\{(x_0,\varphi,t):\frac{(2t)(\varphi(x_0))}{t^2+(\varphi(x_0))^2}\in{(a,b)}\}$ for a particular $\varphi\in{\Phi_n}$ (since $\varphi(x)\neq{0}\Rightarrow{\varphi'(x)\neq{0}}$ if $\varphi\neq{\varphi'}$).
Thus $f^{-1}(a,b)\cap{\{x_0\}\times{Y_n}}$ is open in $\{x_0\}\times{Y_n}$ since $\frac{(2t)(\varphi(x_0))}{t^2+(\varphi(x_0))^2}$ is continuous as a function of $t$ ($t\neq{0}$ since $0\not\in{(a,b)}$).
So $f^{-1}(a,b)\cap{\{x_0\}\times{Y}}$ is open in $\{x_0\}\times{Y}$.
Now take an open set $[0,a)$ in $[0,1]$.
For each $n\in{\gamma}$, $f^{-1}[0,a)$ contains $(x_0,\lambda_n)$ and there is at most one $\varphi$ in $\Phi_n$ such that for some $t$, $(x_0,\varphi,t)\not\in{f^{-1}[0,a)}$. If there is no such $\varphi$ in $\Phi_n$, then $f^{-1}[0,a)\cap{\{x_0\}\times{Y_n}}=\emptyset$. If such a $\varphi$ does exist in $\Phi_n$, then
by the continuity of $\frac{(2t)(\varphi(x_0))}{t^2+(\varphi(x_0))^2}$ in terms of $t$ (for fixed $x_0$ and $\varphi$), it follows that for our particular $\varphi\in{\Phi_n}$ the set $\{t\in{[0,1]}:(x_0,\varphi,t)\not\in\ f^{-1}[0,a)\}$ is closed in $[0,1]$, and so is compact.
Thus $(\{x_0\}\times{Y_n})\setminus{f^{-1}[0,a)}$ is closed and compact when restricted to $\{x_0\}\times{(\Phi_n\times{[0,1]})}$, and $(x_0,\lambda_n)\in{f^{-1}[0,a)}$ for each $n\in{\gamma}$. So $f^{-1}[0,a)\cap{\{x_0\}\times{Y_n}}$ is open in $\{x_0\}\times{Y_n}$ for each $n\in{\gamma}$, and therefore $f^{-1}[0,a)\cap{\{x_0\}\times{Y}}$ is open in $\{x_0\}\times{Y}$.
It follows that $f|_{\{x\}\times{Y}}$ is continuous for any fixed $x\in{X}$, and the separate continuity of $f$ is established.

We now demonstrate that  for any dense $G_\delta$ subset $D$ of $X$, $f$ is not continuous on $D\times{Y}$. To do so it suffices to show that for each $x\in{K}$, there exists $y\in{Y}$ such that $f$ is discontinuous at $(x,y)$.
Choose $x_K\in{K}$.
Then for some $n\in{\gamma}$, $x_K\in{K_n}$.
Since $\{x\in{X}: \exists{\varphi\in{\Phi_n}}, \varphi(x)\neq{0}\}$ is dense in $X$, there is a directed set $A$ and a net $(x_\alpha)_{\alpha\in{A}}$ in $\{x\in{X}: \exists{\varphi\in{\Phi_n}}, \varphi(x)\neq{0}\}$ such that  $x_\alpha\rightarrow{x_K}$.
Since each $x_\alpha$ is in $\{x\in{X}: \exists{\varphi\in{\Phi_n}}, \varphi(x)\neq{0}\}$ there exists $\varphi'\in{F_n}$ and $r\in{(0,1]}$ such that $\varphi'(x_\alpha)=r$.
                Let $y_\alpha=(\varphi',r)$ for each $\alpha\in{A}$ and let $p_\alpha=(x_\alpha,y_\alpha)$. Then $(y_\alpha)_{\alpha\in{A}}$ is a net in $Y_n$ such that for each $\alpha\in{A}$, $f(p_\alpha)=f(x_\alpha,y_\alpha)=f(x_\alpha,\varphi',r) = \frac{(2r)(\varphi'(x_\alpha))}{r^2+(\varphi'(x_\alpha))^2} = \frac{(2r)(r)}{r^2+r^2} = 1$.
           Since $(y_\alpha)$ is a net contained in the compact subspace $Y_n$ of $Y$, there is a subnet $(\tilde{y}_{\beta})_{\beta\in{B}}$ of $(y_\alpha)$ that converges to some point $y\in{Y_n}$.
So then $(\tilde{x}_{\beta})_{\beta\in{B}}$ is a subnet of $(x_\alpha)_{\alpha\in{A}}$ and thus $\tilde{x}_{\beta}\rightarrow{x_K}$.
                   Let $\tilde{p}_\beta=(\tilde{x}_\beta,\tilde{y}_\beta)$.
Since $(\tilde{x}_{\beta})\rightarrow{x_K}$ in $X$ and  $(\tilde{y}_{\beta})\rightarrow{y}$ in $Y$, $\tilde{p}_\beta\rightarrow{(x_K,y)}$ in $X\times{Y}$.
But since $x_K\in{K_n}$ (and $y\in{Y_n}$) and $(\tilde{p}_\beta)_{\beta\in{B}}$ is a subnet of $(p_\alpha)_{\alpha\in{A}}$, we have

\noindent
$f(x_K,y) = 
\left\{
\begin{array}{lr}
f(x_K,\varphi,t)=\frac{(2t)(\varphi(x_K))}{t^2+(\varphi(x_K))^2}=\frac{0}{t^2}=0&t\neq{0}$ and $\forall{n\in{\gamma}}, y\neq{\lambda_n}\\
0&$otherwise,$
\end{array}
\right.$

\noindent and $f(\tilde{p}_\beta)=1$ for every $\beta\in{B}$. 
So $\tilde{p}_\beta\rightarrow{(x_K,y)}$ in $X\times{Y}$ but $f(\tilde{p}_\beta)\not\rightarrow{f(x_K,y)}$ in $[0,1]$.
Thus $f$ is discontinuous at $(x_K,y)$, and it follows that $X$ is not weakly Namioka.
\qed
\end{proof}

\begin{theorem}
\label{small}
Perfectly normal weakly Namioka spaces are Baire.
\end{theorem}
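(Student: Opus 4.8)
The plan is to prove the contrapositive, following the overall architecture of the proof of Theorem~\ref{big} but exploiting perfect normality to dispense entirely with the separability-driven machinery (the maximal families $\Phi_n$ and the appeal to Zorn's Lemma). Assume $X$ is perfectly normal but not Baire. Then, exactly as in the previous proof, there is a nonempty open set $K\subseteq\bigcup_{n\in\gamma}K_n$ with $\gamma$ countable and each $K_n$ closed and nowhere dense. The key observation is supplied by one of the standard characterizations of perfect normality: every closed set is a zero set. Hence for each $n$ there is a continuous $g_n:X\rightarrow[0,1]$ with $g_n^{-1}(0)=K_n$ \emph{exactly}. A single such function per index replaces the whole family $\Phi_n$ of the earlier argument, precisely because $g_n$ is strictly positive on the entire dense open set $X\setminus K_n$, so it detects every point approaching $K_n$ at once; this is the ingredient that separability was compensating for before.

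Next I would take $Y=\gamma\times[0,1]$ with $\gamma$ carrying the discrete topology. Being a countable disjoint union of copies of $[0,1]$, $Y$ is second countable, as required by the definition of weakly Namioka. Define $f:X\times Y\rightarrow[0,1]$ by $f(x,(n,t))=\frac{2t\,g_n(x)}{t^2+g_n(x)^2}$ when $t^2+g_n(x)^2\neq 0$ and $f(x,(n,t))=0$ otherwise. Verifying separate continuity is routine: for fixed $(n,t)$ with $t>0$ the denominator is bounded below by $t^2>0$ so $f$ is continuous in $x$ (and it is identically $0$ when $t=0$), while for fixed $x$ and fixed $n$ it is the standard continuous function of $t$, continuity across the discrete coordinate $n$ being automatic.

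To show $f$ witnesses the failure of the weakly Namioka property, it suffices (as in Theorem~\ref{big}) to produce, for each $x\in K$, a point $y\in Y$ at which $f$ is discontinuous; any dense $G_\delta$ set $D$ then meets the nonempty open set $K$, and the bad point lies in $D\times Y$. So fix $x_0\in K$, say $x_0\in K_m$, whence $g_m(x_0)=0$. Since $K_m$ is nowhere dense, $\{x:g_m(x)>0\}=X\setminus K_m$ is dense, so there is a net $x_\alpha\rightarrow x_0$ with $r_\alpha:=g_m(x_\alpha)>0$. Putting $y_\alpha=(m,r_\alpha)$ gives $f(x_\alpha,y_\alpha)=\frac{2r_\alpha^2}{2r_\alpha^2}=1$ for every $\alpha$, while $r_\alpha\rightarrow g_m(x_0)=0$ forces $(x_\alpha,y_\alpha)\rightarrow(x_0,(m,0))$ and $f(x_0,(m,0))=0$. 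Thus $f$ is discontinuous at $(x_0,(m,0))$, so $f$ cannot be continuous on $D\times Y$, and $X$ is not weakly Namioka.

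I expect the only genuine obstacle to be the conceptual one: recognizing that perfect normality yields the exact zero-set functions $g_n$, which collapses the elaborate construction of Theorem~\ref{big} into a single clean family indexed by the countable set $\gamma$. Once that is in hand every verification is routine; the one point to state with care is that nets, rather than sequences, must be used to approach $x_0$ through $X\setminus K_m$, since $X$ need not be first countable.
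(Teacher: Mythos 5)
Your proof is correct, but it takes a genuinely different route from the paper. The paper's own proof is a two-line reduction: from the failure of the Baire property it extracts a nonempty open first-category set $U$, which in a perfectly normal space is automatically $F_\sigma$, and then invokes an external ``inverse problem'' theorem of Maslyuchenko, Mykhaylyuk and Sobchuk \cite{Maslyuchenko} producing a separately continuous $f:X\times Y\rightarrow\mathbb{R}$ whose discontinuity set is exactly $U\times\{y_0\}$. You instead give a self-contained construction that adapts the machinery of Theorem \ref{big}, and your key observation --- that perfect normality furnishes, for each closed nowhere dense $K_n$, a continuous $g_n$ with $g_n^{-1}(0)=K_n$ \emph{exactly} --- is precisely what lets you discard the three heaviest ingredients of that earlier proof: the Zorn's Lemma construction of maximal families $\Phi_n$ (whose role was to make $\{x:\exists\varphi\in\Phi_n,\ \varphi(x)\neq 0\}$ dense, which exactness gives you for free since $\{x:g_n(x)>0\}=X\setminus K_n$ is dense open), the one-point compactifications of $\Phi_n\times[0,1]$, and the subnet-extraction step (in your version $y_\alpha=(m,g_m(x_\alpha))\rightarrow(m,0)$ follows directly from continuity of $g_m$, so no compactness is needed). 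All verifications in your argument check out: separate continuity holds because the denominator $t^2+g_n(x)^2$ is bounded away from zero whenever either coordinate argument makes it so, and $f\equiv 0$ on the offending fibers; and every dense $G_\delta$ set meets the open set $K$, where you exhibit a discontinuity point. The trade-off is the usual one: the paper's proof is shorter and ties the theorem to the literature on prescribed discontinuity sets (and that cited theorem is stronger, giving the discontinuity set exactly), while yours is elementary, makes the role of perfect normality completely transparent, and in effect reproves the special case of the cited result that is actually needed.
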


\begin{proof}
Let $X$ be a perfectly normal space and assume that $X$ is not a Baire space. There then exists an open set $U$ in $X$ that is of first category and of type $F_\sigma$. Let $Y$ be a second countable completely regular space with a non-isolated point $y_0$. By Theorem 5, p. 1111 of \cite{Maslyuchenko}, there exists a separately continuous $f:X\times{Y}\rightarrow{\mathbb{R}}$ whose set of points of discontinuity is $U\times{\{y_0\}}$. It follows that $X$ is not weakly Namioka.
\qed
\end{proof}

\begin{theorem}[Main Theorem]
Let $X$ be either a completely regular separable space or a perfectly normal space. Then $X$ is Baire if and only if $X$ is weakly Namioka.
\end{theorem}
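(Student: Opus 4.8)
The plan is to recognize that this Main Theorem is simply the assembly of the three theorems already established, so no new machinery is required; the proof will consist of citing the appropriate earlier result in each case. First I would dispatch the forward direction: if $X$ is Baire, then $X$ is weakly Namioka. This follows immediately from Theorem \ref{Brown}, which asserts that every Baire space is weakly Namioka with no additional separation or countability hypothesis on $X$. In particular it applies regardless of whether $X$ happens to be completely regular and separable or perfectly normal, so this half of the equivalence is uniform across both cases of the hypothesis.

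For the converse I would split along the disjunction in the hypothesis. Suppose $X$ is weakly Namioka. If $X$ is completely regular and separable, then Theorem \ref{big} gives directly that $X$ is Baire. If instead $X$ is perfectly normal, then Theorem \ref{small} gives that $X$ is Baire. Since the standing hypothesis guarantees that $X$ belongs to at least one of these two classes, in either case the conclusion that $X$ is Baire is reached, which completes the reverse implication and hence the equivalence.

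The only point worth flagging is that the two sufficient conditions used in the converse --- complete regularity together with separability on the one hand, and perfect normality on the other --- are genuinely distinct hypotheses arising from the two independent constructions in Theorems \ref{big} and \ref{small}, and neither subsumes the other; the Main Theorem merely records that each of them suffices to force a weakly Namioka space to be Baire. Consequently I expect no real obstacle at this stage: all of the substantive work has already been carried out in the earlier results (the Calbrix--Troallic-style category argument underlying Theorem \ref{Brown}, the Zorn's-lemma construction of the partition-of-unity family and the associated separately continuous $f$ on the one-point compactification in Theorem \ref{big}, and the appeal to Maslyuchenko's discontinuity-set theorem in Theorem \ref{small}). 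The Main Theorem is therefore best presented as a short corollary whose proof is a case analysis invoking these three theorems.
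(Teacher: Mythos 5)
Your proof is correct and is exactly the paper's own argument: the paper likewise proves the Main Theorem by citing Theorem \ref{Brown} for the forward direction and Theorems \ref{big} and \ref{small} for the two cases of the converse. Nothing is missing; the case analysis you spell out is the intended content of the paper's one-line proof.
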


\begin{proof}
Follows immediately from Theorems \ref{Brown}, \ref{big}, and \ref{small}
\qed
\end{proof}

\begin{remark} Since metrizable spaces are perfectly normal, weakly Namioka and Baire spaces coincide in the class of metrizable spaces.
\end{remark}

\begin{remark}
Observe that each proof from this section remains valid if the arbitrary metric space in our definition of a weakly Namioka space is taken to be $\mathbb{R}$. Thus, among completely regular separable spaces and among perfectly normal spaces, these definitions give the same class of spaces.
An analogous result for Namioka spaces -- that among Baire spaces, substituting $\mathbb{R}$ for the arbitrary metric space in the definition of Namioka spaces gives the same class of spaces -- is known from \cite{Bouziad}.
\end{remark}

Recall that for topological spaces $X$ and $Y$, a function $f:X\rightarrow{Y}$ is said to be \emph{quasi-continuous at the point $x$} if for every open set $U$ in $X$ containing $x$ and for every open set $V$ in $Y$ containing $f(x)$ there exists an open, non-empty subset $U'$ of $U$ such that $f(U')\subset{V}$. The function is \emph{quasi-continuous} if it is quasi-continuous at all $x\in{X}$. We say that $f:X\times{Y}\rightarrow{Z}$ is \emph{quasi-continuous with respect to $x$} if $f|_{X\times{\{y\}}}$ is quasi-continuous for all $y\in{Y}$ and that $f$ is \emph{quasi-continuous with respect to $y$} if $f|_{\{x\}\times{Y}}$ is quasi-continuous for all $x\in{X}$. For a topological space $X$, second countable space $Y$, and compact metrizable space $Z$, T. Nagamizu shows in \cite{Nagamizu} that if $f:X\times{Y}\rightarrow{Z}$ is continuous with respect to $y$ and $f|_{X\times\{y\}}$ quasi-continuous for each $y$ from a dense set $E$ in $Y$, then there exists a residual subset $A$ of $X$ such that $f$ is (jointly) continuous on $A\times{Y}$. Note that Nagamizu's result resembles Namioka's famous theorem in \cite{Namioka}, but with $Y$ being second countable. Given the relationship established between Namioka and weakly Namioka spaces, it is thus natural to wonder for what class of spaces the assumption of separate continuity in Namioka's theorem can be weakened.

\begin{question}
Let Y be locally compact and $\sigma$-compact, and let Z be an
arbitrary pseudo-metric space. Determine the class $\mathcal{X}$ such that for any
$X$ in $\mathcal{X}$ and any function $f:X\times{Y}\rightarrow{Z}$ which is quasi-continuous with
respect to $x$ and continuous with respect to $y$, there is a dense $G_{\delta}$ subset
$A$ of $X$ such that $f$ is continuous on $A\times{Y}$.
\end{question}

\section{Examples}

Recall that a \emph{network} in a space $X$ is a collection of subsets $\rho$ such that given any open subset $U$ of $X$ and $x\in{U}$, there is a member $P$ of $\rho$ such that $x\in{P}\subset{U}$.

\begin{example}
In \cite{Talagrand}, Remarque $b$, p. 241, M. Talagrand shows that the function $f:[0,1]\times{C_p([0,1],[0,1])\rightarrow{[0,1]}}$ given by $f(x,y)=y(x)$ is separately continuous and discontinuous at \emph{every} point of $X\times{Y}$. It can be shown that $Y=C_p([0,1],[0,1])$, the function space with the topology of pointwise convergence, is completely regular with a countable network, and as such is hereditarily Lindelof and hereditarily separable (see R. Engelking \cite{Engelking}, Exercise 3.4.H, p. 165 and Theorem 2.6.4, p. 107).
\end{example}

\begin{example}
Answering questions due to A. Alexiewicz, W. Orlicz \cite{Alexiewicz} and J.P. Christensen \cite{Christensen} pertaining to the necessity of the compactness assumption on the second factor $Y$ in the theorem for Namioka spaces, J.B. Brown (\cite{Piotrowski}, p. 313) constructs a separately continuous real-valued function defined on the Cartesian product of the closed interval $[0,1]$  and the topological sum of $\mathfrak{c}$ many intervals -- a complete metric space -- such that the conclusion of the theorem for Namioka spaces fails.

Answering Problem C, p. 203 of \cite{Henriksen}, the first-named author \cite{Piotrowski2} refines Brown's techniques (``two-dimensional" example) by constructing a separately continuous real-valued function $f$ defined on the Cartesian product of two complete metric spaces $X,Y$ such that the (in fact, dense $G_\delta$) set $C(f)$ of points of (joint) continuity \emph{fails} to contain either $A\times{Y}$ or $X\times{B}$ for \emph{any} dense $G_\delta$-set $A$ in $X$ or \emph{any} dense $G_\delta$ set $B$ in $Y$. In other words, the condition of the theorem for Namioka spaces fails ``in both directions". 
\end{example}

Recall (\cite{Gillman}, Chapter 4) that a point $x\in{X}$ is called a \emph{P-point} if any $G_\delta$ set containing $x$ is a neighborhood of $x$, and a 
space $X$ is called a \emph{P-space} if each $x\in{X}$ is a P-point.

   The classical example of a discontinuous separately continuous function $sp : {\mathbb{R}\times{\mathbb{R}}}\rightarrow{\mathbb{R}}$ 
is defined by:

  $sp(x,y) = 
\left\{
\begin{array}{lr}
\frac{2xy}{x^2+y^2}&(x,y)\neq{(0,0)}\\
0&(x,y)=(0,0)
\end{array}
\right.$

\noindent One may think that any non-discrete completely regular (Hausdorff) spaces $X$, $Y$ admit a discontinuous, separately continuous
function $f: X\times{Y}\rightarrow{\mathbb{R}}$, but this is not true. In fact:

\begin{proposition}
\label{HWprop} (\cite{Henriksen}, Theorem 6.14, p.196 ) Let $X,Y$ be completely regular spaces, $x_0\in{X}$ be a P-point and $y_0\in{Y}$ have a separable neighborhood. If $f:X\times{Y}\rightarrow{\mathbb{R}}$ is separately continuous, then $f$ is continuous at $(x_0,y_0)$.
\end{proposition}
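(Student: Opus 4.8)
The plan is to use the two hypotheses in complementary roles: the P-point property of $x_0$ will let me intersect \emph{countably many} open neighborhoods in the first variable and still land inside a neighborhood of $x_0$, while the separable neighborhood of $y_0$ will reduce the second variable to a countable set of test points $\{y_n\}$. Fix $\epsilon>0$; the goal is to produce an open box $U'\times W_0$ around $(x_0,y_0)$ on which $f$ differs from $f(x_0,y_0)$ by less than $\epsilon$.

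First I would use continuity of $f(x_0,\cdot)$ at $y_0$ to choose an open set $W_0$ with $y_0\in W_0\subseteq N$, where $N$ is the given separable neighborhood, such that $|f(x_0,y)-f(x_0,y_0)|<\epsilon/3$ for every $y\in W_0$. Since $W_0$ is open in the separable space $N$, it is itself separable, so I may fix a countable dense subset $\{y_n:n\in\mathbb{N}\}$ of $W_0$. Next, for each $n$, continuity of $f(\cdot,y_n)$ at $x_0$ supplies an open neighborhood $U_n$ of $x_0$ with $|f(x,y_n)-f(x_0,y_n)|<\epsilon/3$ for all $x\in U_n$. Here the P-point hypothesis enters: $U=\bigcap_{n}U_n$ is a $G_\delta$ set containing $x_0$, hence a neighborhood of $x_0$, and I pick an open $U'$ with $x_0\in U'\subseteq U$. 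Combining the two estimates, for every $x\in U$ and every $n$ the triangle inequality gives $|f(x,y_n)-f(x_0,y_0)|<2\epsilon/3$.

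The crux is then to pass from this control on the dense set $\{y_n\}$ to control at \emph{every} $y\in W_0$. Fixing $x\in U$ and writing $g=f(x,\cdot)$, which is continuous by separate continuity, each $y\in W_0$ lies in the closure of $\{y_n\}$, so some net drawn from $\{y_n\}$ converges to $y$; continuity of $g$ together with the closedness of the interval $[f(x_0,y_0)-2\epsilon/3,\,f(x_0,y_0)+2\epsilon/3]$ then forces $|f(x,y)-f(x_0,y_0)|\le 2\epsilon/3<\epsilon$. Thus $f$ stays within $\epsilon$ of $f(x_0,y_0)$ on all of $U'\times W_0$, and since $\epsilon$ was arbitrary, $f$ is continuous at $(x_0,y_0)$.

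I expect this last transfer step to be the main obstacle, for two reasons. First, $Y$ is not assumed first countable, so the density argument must be run with nets rather than sequences, and one must be careful that the approximating net can be taken \emph{inside} the open set $W_0$, so that the uniform bound $2\epsilon/3$ applies to every term. Second, the whole scheme hinges on the \emph{order} in which the countable operations are performed: the dense set $\{y_n\}$ must be chosen before the neighborhoods $U_n$, so that the single $G_\delta$ intersection produced by the P-point property simultaneously controls all test points. It is worth noting that complete regularity does not appear to be needed in this particular argument; separate continuity, the P-point property, and separability seem to carry the proof by themselves.
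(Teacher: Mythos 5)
Your proof is correct and follows essentially the same route as the paper, which proves the more general Theorem \ref{counter} by fixing a countable dense subset of the separable neighborhood, shrinking in the $y$-variable first, intersecting the countably many $x$-neighborhoods via the P-point property, and transferring the bound to all of $W_0$ by density and continuity in $y$. Your $\epsilon/3$ bookkeeping with the closed interval is just the metric-space specialization of the paper's regularity step $V^*\subset cl(V^*)\subset V$, and your closing observation that complete regularity of $X$ and $Y$ is not needed is precisely what the paper exploits in stating Theorem \ref{counter}.
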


   Hence one must impose certain restrictions on the nature of non-isolated points in $X$ and $Y$ to guarantee the existence of a separately continuous function. Following \cite{Banakh} one may ask the following natural questions:
 Suppose $X$ and $Y$ are completely regular spaces with non-isolated $G_\delta$ points $x\in{X}$ and $y\in{Y}$. Is there a separately continuous 
function $f : X\times{Y}\rightarrow{\mathbb{R}}$ that is discontinuous at $(x,y)$? Can such a function $f$ be chosen of the form $f = sp\circ{( g\times{h})}$  for suitable 
continuous functions $g : X\rightarrow{\mathbb{R}}$ and $h : Y\rightarrow{\mathbb{R}}$?

  It is shown in \cite{Banakh} that this cannot be done in ZFC. Under Martin's Axiom these questions have negative answers. On the 
other hand there are models of ZFC (e.g., Near Coherence of P-Filters) in which the answers to these questions are affirmative. See \cite{Banakh} for more details.

 We now generalize Proposition \ref{HWprop}:

\begin{theorem}
\label{counter}
Let $x_0\in{X}$ be a P-point, $y_0\in{Y}$ have a separable neighborhood, and $Z$ be regular. If $f:X\times{Y}\rightarrow{Z}$ is separately continuous, then $f$ is continuous at $(x_0,y_0)$.
\end{theorem}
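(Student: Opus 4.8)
The plan is to verify the neighborhood formulation of joint continuity directly. Writing $z_0 := f(x_0,y_0)$, I would take an arbitrary open set $W \subseteq Z$ with $z_0 \in W$ and produce a neighborhood $U \times V$ of $(x_0,y_0)$ satisfying $f(U \times V) \subseteq W$. The first move is to use regularity of $Z$ to shrink $W$ to an open set $W_1$ with $z_0 \in W_1 \subseteq \overline{W_1} \subseteq W$. This closed-neighborhood base is precisely what takes the place of the metric $\varepsilon$-balls that were available in the real-valued case of Proposition \ref{HWprop}; it is the only feature of $Z$ that the argument will use (together with the fact that a continuous map sends the closure of a set into the closure of its image).

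Next I would exploit continuity in the second variable at $x_0$. Since $f(x_0,\cdot)$ is continuous at $y_0$, there is a neighborhood $V$ of $y_0$ with $f(\{x_0\}\times V) \subseteq W_1$, and I may insist that $V$ lie inside the given separable neighborhood of $y_0$. An open subset of a separable space is itself separable, since intersecting an open set with a dense set again yields a set dense in that open set; hence $V$ admits a countable dense subset $\{y_n : n \in \mathbb{N}\}$, and each value $f(x_0,y_n)$ belongs to $W_1$.

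The P-point hypothesis enters at the third step. For each $n$, continuity of $f(\cdot,y_n)$ at $x_0$ provides an open neighborhood $U_n \ni x_0$ with $f(U_n \times \{y_n\}) \subseteq W_1$. Setting $U := \bigcap_{n} U_n$, the P-point property guarantees that this countable intersection of neighborhoods of $x_0$ is again a neighborhood of $x_0$, so $f(x,y_n) \in W_1$ holds for every $x \in U$ and every $n$. To close the loop I would invoke continuity in the second variable once more, now at an arbitrary $x \in U$: fixing $(x,y) \in U \times V$, density of $\{y_n\}$ in $V$ gives $y \in \overline{\{y_n\}}$, and continuity of $f(x,\cdot)$ then forces $f(x,y) \in \overline{f(x,\{y_n\})} \subseteq \overline{W_1} \subseteq W$. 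Thus $f(U \times V) \subseteq W$, which is the desired continuity at $(x_0,y_0)$.

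I expect the main obstacle to be organizing the argument so that \emph{regularity alone} on $Z$ suffices, with no metric and no appeal to complete regularity of $X$ or $Y$. The structure of $Z$ is used in exactly two places—the shrinking $z_0 \in W_1 \subseteq \overline{W_1} \subseteq W$ and the closure-preservation $f(x,\overline{A}) \subseteq \overline{f(x,A)}$—and the delicate point is to ensure that no stronger separation is tacitly assumed when passing from $W_1$ to $\overline{W_1}$. A secondary point worth checking carefully is that $V$ may indeed be chosen inside the separable neighborhood of $y_0$ while remaining separable, since the countability of $\{y_n\}$ is what makes the P-point intersection applicable.
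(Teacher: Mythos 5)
Your proposal is correct and follows essentially the same route as the paper's proof: regularity of $Z$ supplies the open set $W_1$ with $\overline{W_1}$ inside the target, continuity in $y$ shrinks the separable neighborhood, continuity in $x$ at each point of a countable dense subset plus the P-point property yields the neighborhood $U$, and a final appeal to continuity in $y$ together with $f(x,\overline{A})\subseteq\overline{f(x,A)}$ closes the argument. The only difference is cosmetic: you justify explicitly that the shrunken neighborhood of $y_0$ remains separable, a point the paper absorbs into a ``without loss of generality.''
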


\begin{proof}
Assume $x_0\in{X}$, $y_0\in{Y}$, $Z$, and $f:X\times{Y}\rightarrow{Z}$ are as above. 
Let $S$ be a separable neighborhood of $y_0$,
$D$ a countable dense subset of $S$, and $V$ an open neighborhood of $f(x_0,y_0)$ in $Z$.
By the regularity of $Z$, we can choose an open $V^*$ in $Z$ such that $f(x_0,y_0)\in{V^*}\subset{cl(V^*)}\subset{V}$, where $cl(V^*)$ denotes the closure of $V^*$ in $Z$.
Using continuity with respect to $y$, we may assume without loss of generality that $f(\{x_0\}\times{S})\subset{V^*}$.
By continuity with respect to $x$ we have for any $y\in{D}$ some $U_y$ open in $X$ such that $x_0\in{U_y}$ and $f(U_y\times{\{y\}})\subset{V^*}$.
Since $x_0$ is a P-point, there exists an open set $U\subset{\bigcap_{y\in{D}}U_y}$ containing $x_0$.
As $f(U_y\times{\{y\}})\subset{V^*}$ and $U\subset{U_y}$ for each $y\in{D}$, we have $f(U\times{D})\subset{V^*}$. 
Since $f$ is continuous with respect to $y$, it follows that $f(U\times{S})\subset{f(U\times{cl(D))}}\subset{cl(V^*)}\subset{V}$.
Therefore $f$ is continuous at $(x_0,y_0)$.
\qed
\end{proof}

\begin{remark}
Example 6.16 of \cite{Henriksen} shows that the local separability condition on $Y$ is necessary to Proposition \ref{HWprop} and Theorem \ref{counter}.
\end{remark}

\begin{example}
In Example 3.2 of \cite{Gruenhage}, G. Gruenhage and D. Lutzer construct a Lindelof, hereditarily paracompact, linearly ordered (thus Hausdorff and completely normal) P-space that is not a Baire space. By Theorem \ref{counter}, this space is weakly Namioka.
\end{example}

Following \cite{vanDouwen}, we call a space $X$ \emph{ultradisconnected} if it is crowded (has no isolated points) and if every two disjoint crowded subsets of $X$ have disjoint closures.

\begin{example}
In Example 3.3 of \cite{vanDouwen}, E.K. van Douwen constructs a countable (thus separable), regular (and Hausdorff), ultradisconnected space $X$.
As an ultradisconnected space, $X$ has no isolated points, and so is the countable union of closed nowhere dense sets (its single points) and is therefore not a Baire space.
For any second countable space $Y$, the product $X\times{Y}$ is first countable. Ultradisconnected spaces are extremally disconnected \cite{vanDouwen}, so any sequence $(x,y)_n$ in $X\times{Y}$ converging to $(x,y)$ must be eventually constant in $X$. Thus for any metric space Z and any function $f:X\times{Y}\rightarrow{Z}$ continuous with respect to $y$, the image of $(x,y)_n$ under $f$ must converge to $f(x,y)$. It follows that any separately continuous function $X\times{Y}\rightarrow{Z}$ is continuous and so $X$ is weakly Namioka.
\end{example}

\section*{Acknowledgements}
The authors would like to thank J.B. Brown for contributing the proof of Lemma \ref{BrownLemma}.












\bibliographystyle{plain}

\end{document}